\title{
       On the Constant Reductions of Automorphism Groups of valued Function Fields
       }
\author{
        Tovondrainy Christalin Razafindramahatsiaro \\
        African Institute of Mathematical Science\\
        6 Melrose road Muizenberg, 7945 Cape Town South Africa         
}
\date{\today}
\newcommand{\F}{\mathbb{F}}
\newcommand{\E}{\mathbb{E}}
\newcommand{\A}{\mathcal{A}}
\newcommand{\C}{\mathbb{C}}
\newcommand{\Aut}{\mathrm{Aut}}
\newtheorem{thm}{Theorem}[section]
\newtheorem{lem}[thm]{Lemma}
\newtheorem{pro}[thm]{Proposition}
\newtheorem{cor}[thm]{Corollary}
\newtheorem{rem}[thm]{Remark}
\begin{document}
\maketitle
\begin{abstract}

In this paper, we investigate properties of automorphism groups of function fields in one variable in relation to its reductions with respect to special valuations. In 1969, Deligne and Mumford proved that there exists a natural injective homomorphism between the automorphism groups of $\mathcal{X}_{\eta}$ and any special fibre of $\mathcal{X}.$ Here, we give a generalisation of this theorem in function field setting of Deuring's theory of constant reductions.

ACKNOWLEDGEMENT: I would to thank my advisor Barry Green for his constant guidance during my PhD study. I also want to express my gratitude to the African Institute of Mathematical Sciences (AIMS) for the financial support during the last three years.
\end{abstract}

\section{Introduction}
Let $X_g$ be a smooth projective irreducible curve of genus $g$ defined over an algebraically closed field $k.$ Denote by $p$ the characteristic of $k.$ Determining which groups can occur as automorphism groups $\mathrm{Aut}_k(X_g)$ of $X_g$ is a classic problem in mathematics. In the case when $g\leq 1,$ the problem is well understood. 

\

\textit{For convenience, in this note, we assume that the genus of a given curve is at least $2,$ unless otherwise specified.}

\

It is well known that $G=\mathrm{Aut}(X_g)$ is a finite group. So, one can ask: For a given $g,$ which finite groups can occur as automorphism groups on algebraic curves of genus $g$? And, conversely, for a given finite group $G,$ for which genera $g$ does there exist a curve of genus $g$ which has $G$ as automorphism group? 

In \cite{hu}, Hurwitz proved that the order of the group $G$ is less than or equal to $84(g-1)$ in characteristic $0.$ As an example, equality holds for the curve of genus $3$ defined by:
$$x^3y+y^3z+z^3x=0.$$ Such curves are called \texttt{Hurwitz curves.} Furthermore, in the case when $k$ is the field of complex numbers, there are  methods to find precisely which finite groups can occur on the curve $X_g.$ Indeed, first, recall that the category of algebraic curves over the complex numbers $\C$ is equivalent to the category of Riemann surfaces and fields of transcendence degree $1$ over $\C.$ So to determine which finite groups can act as groups of automorphisms of algebraic curves, it is sufficient to answer the inverse Galois problem for the rational function field $\C(x).$ That is what Hurwitz did. By Lefshetz Principle, Hurwitz results hold also over any algebraically closed field of characteristic $0.$ 

Unfortunately, the methods in the characteristic $0$ case do not seem to apply in positive characteristic. And there are very few results on this problem in positive characteristic. Although the group $G$ must be finite, there is no precise bound. In \cite{roq}, Roquette gave an example of a curve with automorphism group whose order is greater than $84(g-1).$ In \cite{hstich}, Stichtenoth proved that the order of $G$ must be less than $16g^4$ for $p>0.$ 

\

One way to understand why the methods in characteristic $0$ do not apply in general characteristic is the study of the reductions theory of curves. Indeed, we want to compare, for example, the automorphism group of a curve $X$ defined in characteristic $0$ with the automorphism group of the reduction $\overline{X}$ modulo a prime $p$ of $X.$ In \cite{delimum}, Deligne and Mumford proved that if $\mathcal{X}$ is a stable model of a smooth irreducible curve $X$ (defined in characteristic $0$ and the generic fiber of $\mathcal{X}$ is assumed to be isomorphic to $X$), then for any special fibre $\overline{X}$ of $\mathcal{X},$ there is a natural injective homomorphism from $\mathrm{Aut}(X)$ to $\mathrm{Aut}(\overline{X}).$ This solves partially the problem.

\

As we have mentioned above, this note is devoted to generalise this result of Deligne and Mumford. Our approach is similar to what Hurwitz did to solve the problem of determining finite groups than can occur as automorphism groups of algebraic curves in characteristic $0.$ We use the fact that the category of smooth projective irreducible curves over an algebraically closed field $k$ is equivalent to the category of function fields in one variable over $k.$ So, instead of working directly on curves, we will work on function fields in one variable. More precisely, we use Deuring's theory of constant reductions. 

Let $k$ be a field equipped with a valuation $v$\footnote{Note that the valuation $v$ is not necessary discrete nor a good reduction.} and let $kv$ be its residue field. One considers an algebraic function field $F{|}k$ of one variable and of genus $\geq2$ over $k.$ The valuation $v$ can be canonically extended to $F$ such that its reduction $Fv{|}kv$ is again an algebraic function field of one variable (Such prolongations are called constant prolongations). There is a well-defined canonical homomorphism (that we will define in section 3)
$$\phi_\mathcal{O}: Z(\mathcal{O})\rightarrow \mathrm{Aut}(Fw{|}kv)$$ where $\mathcal{O}$ denotes the valuation ring of $\F$ which corresponds to $v$ and $Z(\mathcal{O})$\footnote{$Z(\mathcal{O})=\mathrm{Aut}(\F{|}k)$ in the case when $v$ is a good reduction.}, the corresponding decomposition group.  Our main results in this context are the following: 

\

\textit{
Let $(\F{|}k,v)$ be a valued function field in one variable. Let $G{=}\mathrm{Aut}(\F{|}k).$ If $H$ is a subgroup of $G$ such that the extension $\F v{|}\F^{H}v$ is purely inseparable, then $H$ must be trivial.
}

Using this result (Lemma \ref{thm4}), we prove that:

\textit{
The homomorphism $\phi_{\mathcal{O}}$ defined above is injective. More precisely, we have $$Z(\mathcal{O})\simeq\Aut(\F v{|}\E v_\E)$$ where $\E$ is the fixed field of $G.$
}
\ 

\

 The later (Theorem \ref{thmT}) is a generalisation of the Deligne-Mumford theorem that we will present in the next section.

\

Note that the Deligne-Mumford theorem (hence its generalisation) is a very useful theorem in arithmetic geometry. Indeed, many researchers have used this theorem to solve important problems in mathematics. For instance, using a particular case of the theorem (in the case when the valuation is a good reduction), Kontogeorgis and Yang in \cite{Ariyang} was able to gave a list of the automorphism groups of hyperelliptic modular curves $X_0(N)$ in positive characteristic. In \cite{bi}, Green gave a partial answer to a question asked by Oort on Bounds on the number of automorphisms of curves over algebraically closed fields. We would like to point out that we have used Theorem \ref{thmT} with some results by Shaska \cite{shaska} to give the complete list of all automorphism groups of hyperelliptic curves in odd prime characteristic that can be lifted to characteristic $0.$  Also, Theorem \ref{thmT} can be used in the study of the Tchebotarev Density Theorem for function fields. These results will be published very soon.

\

\textit{Throughout this note, a function field is always a finite algebraic extension of a rational function field of transcendental degree one.}

\section{A Deligne-Mumford Theorem}

Let us consider a normal, connected, projective curve $X$ over $k.$ The arithmetic genera $p_a(X)$ of the curve $X$ coincides with the genus of the associated function field. Recall that the non-negative integer $g(X){:=}\mathrm{dim}_k\ H^1\left(X,\mathcal{O}_X\right)$ is the geometric genus of $X.$ The two invariants $p_a(X)$ and $g(X)$ are equal if $X$ is geometrically connected.

Assume that $k$ is an algebraic closure of a discretely valued field with prolongation $v$ to $k.$ Denote by $\mathcal{O}_k$ the valuation ring on $k$ with respect to $v.$ Note also that we may assume that the residue field which corresponds to the field $k$ is algebraically closed. Denote by $S$ the affine scheme $\mathrm{Spec}\left(\mathcal{O}_k\right)$ with generic point $\eta$ and closed point $s.$ 
\
A smooth projective curve over $\mathcal{O}_k$ does not always have stable reduction over $\mathcal{O}_k.$ An example is given by the projective curve defined over $\mathbb{Q}$ by:
$$x^4+y^4=z^4.$$ 
However, it is well known that
\begin{thm}\label{thmb3}
If $X$ is an irreducible smooth projective curve over $k$ of genus $g\geq2,$ then there exists a unique stable curve $\mathcal{X}$ over $\mathcal{O}_k$ such that the generic fibre $\mathcal{X}_\eta$ and $X$ are isomorphic. 
\end{thm}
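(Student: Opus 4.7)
The plan is to construct $\mathcal{X}$ in two stages: first produce a semi-stable model of $X$ over $\mathcal{O}_k$, and then contract superfluous components of its special fibre to reach a stable one. I would begin by choosing any regular proper model of $X$ over $\mathcal{O}_k$; such a model exists by resolution of singularities for arithmetic surfaces (Lipman, Abhyankar). Applying the Deligne--Mumford semi-stable reduction theorem to $X$, and noting that no further base change of $\mathcal{O}_k$ is needed because $k$ is already algebraically closed with algebraically closed residue field, one obtains a semi-stable model $\mathcal{X}'/\mathcal{O}_k$ with $\mathcal{X}'_\eta \simeq X$.

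Next I would pass from $\mathcal{X}'$ to a stable model by contracting exceptional components. Recall that a semi-stable curve is stable precisely when every smooth rational component of its special fibre meets the rest of the fibre in at least three points. So I would iteratively contract each smooth rational component of $\mathcal{X}'_s$ meeting the remainder of the fibre in only one or two points; on the arithmetic surface these appear as $(-1)$- or $(-2)$-curves and can be blown down cleanly, yielding again a semi-stable model. Since the dual graph of the special fibre is finite and the arithmetic genus is preserved and bounded below by $g\geq 2$, the process terminates at a stable curve $\mathcal{X}$ still satisfying $\mathcal{X}_\eta \simeq X$.

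For uniqueness, suppose $\mathcal{X}_1$ and $\mathcal{X}_2$ are two stable models of $X$. The identity map on the common generic fibre extends to a birational map between them. Dominating both by a common regular model and comparing the exceptional loci of the two contractions, one verifies that any discrepancy would force one of the two models to contain a smooth rational component meeting the rest of its special fibre in fewer than three points, violating stability on that side. The assumption $g\geq 2$ is essential here, since it rules out the pathologies occurring for curves of genus $0$ or $1$.

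The main obstacle, by a wide margin, is the semi-stable reduction theorem itself, whose proof is highly non-trivial. The original argument of Deligne and Mumford passes through the Néron model of the Jacobian $\mathrm{Jac}(X)$: one shows that an abelian variety acquires semi-abelian reduction over the algebraic closure, and transfers this property back to the curve via the identification of $\mathrm{Jac}(X)$ with $\mathrm{Pic}^{0}$ of a suitable regular model. Once semi-stable reduction is granted, the contraction step and the uniqueness argument sketched above are essentially formal.
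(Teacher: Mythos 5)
The paper offers no argument of its own for this theorem --- it simply cites Green--Matignon--Pop \cite{b3} --- so there is no written proof to match; your sketch is the standard derivation of stable reduction over a discrete valuation ring. But as written it has a genuine gap coming from the base ring. In this paper $k$ is the \emph{algebraic closure} of a discretely valued field and $\mathcal{O}_k$ is the valuation ring of the prolonged valuation, so its value group is divisible (typically $\mathbb{Q}$) and $\mathcal{O}_k$ is not Noetherian, let alone a Dedekind domain. Your very first step --- ``choose a regular proper model of $X$ over $\mathcal{O}_k$ by Lipman/Abhyankar resolution'' --- is therefore unavailable: those resolution theorems require an excellent Noetherian two-dimensional base, and over $\mathcal{O}_k$ the notion of a regular model does not even make sense in the usual framework. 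The same problem undermines the intersection-theoretic language of $(-1)$- and $(-2)$-curves, the blow-downs, and the uniqueness argument via a common regular dominating model, all of which presuppose an arithmetic surface over a Dedekind scheme.

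The missing idea is a descent-to-finite-level (limit) step. Since $X$ is of finite type, it is defined over a finite extension $k_1$ of the original discretely valued field; by Deligne--Mumford there is a further finite extension $k_2\subseteq k$ over whose genuinely discrete valuation ring $X_{k_2}$ acquires a stable model, constructed exactly as in your sketch. Base-changing that model to $\mathcal{O}_k$ preserves stability and yields existence; uniqueness over $\mathcal{O}_k$ then follows from uniqueness at finite level together with the fact that any stable model over $\mathcal{O}_k$ descends to some finite stage (or, as in \cite{b3}, from the explicit description of the model via $V$-regular elements and the rings $\mathcal{L}(nD)\cap\mathcal{O}_w$). Your identification of semi-stable reduction as the essential non-formal input is correct, and your intuition that no further base change is needed because $k$ is algebraically closed is the right one --- but it must be implemented through this descent argument rather than by running the DVR proof directly over $\mathcal{O}_k$. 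A smaller imprecision: contracting a chain of $(-2)$-curves does not ``cleanly'' return a regular total space; the stable model typically acquires $A_n$-singularities at the contracted nodes, though its fibres remain nodal, which is all that stability requires.
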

For a reference, see \cite{b3}.

\

As far as we know, the following important theorem first appeared in \cite{delimum} by Deligne and Mumford. Besides, our aim is to generalize this theorem.
\begin{thm}\label{thm21}
Consider a stable model $$\mathcal{X}\rightarrow S$$ of genus $\geq2.$ Denote respectively by $\eta$ and $s$ the generic and closed points of $S$ and assume that the generic fibre $\mathcal{X}_\eta$ is smooth. Then, any automorphism $\sigma$ in $\mathrm{Aut}_k(\mathcal{X}_\eta)$ extends naturally to an automorphism in $\mathrm{Aut}_k(\mathcal{X}).$ Furthermore, the canonical homomorphism  $$\mathrm{Aut}_k(\mathcal{X}_\eta)\rightarrow\mathrm{Aut}_k(\mathcal{X}_s)$$ is injective.
\end{thm}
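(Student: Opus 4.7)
The plan is to deduce both assertions of Theorem \ref{thm21} from the uniqueness part of Theorem \ref{thmb3} together with the rigidity of automorphisms of stable curves of genus $\geq 2$.

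For the extension statement, given $\sigma \in \mathrm{Aut}_k(\mathcal{X}_\eta)$, the idea is to regard $\sigma$ as supplying a second identification of the generic fibre of $\mathcal{X}$ with $X$. Then $\mathcal{X} \to S$ and the twisted model $\sigma^{\ast}\mathcal{X} \to S$ are both stable models of $X$, so the uniqueness in Theorem \ref{thmb3} furnishes a unique $S$-isomorphism $\widetilde{\sigma} : \mathcal{X} \to \mathcal{X}$ whose restriction to $\mathcal{X}_\eta$ recovers $\sigma$. Equivalently, one can form the scheme-theoretic closure $\overline{\Gamma_\sigma}$ of the graph $\Gamma_\sigma \subset \mathcal{X}_\eta \times_k \mathcal{X}_\eta$ inside $\mathcal{X} \times_S \mathcal{X}$ and verify, via the valuative criterion of properness applied to the two projections, that they are isomorphisms; stability is exactly what forces the limit of $\sigma$ at $s$ to be a well-defined automorphism rather than forcing a blow-up.

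For the injectivity of $\mathrm{Aut}_k(\mathcal{X}_\eta) \to \mathrm{Aut}_k(\mathcal{X}_s)$ it is enough, by the first part, to show that an $S$-automorphism $\widetilde{\sigma}$ of $\mathcal{X}$ restricting to the identity on the closed fibre is the identity. The cleanest framework is to invoke the relative automorphism functor $\underline{\mathrm{Aut}}(\mathcal{X}/S)$: for a stable curve of genus $g \geq 2$ this functor is represented by a group scheme over $S$ that is finite and \emph{unramified} (in fact \'etale). An element $\widetilde{\sigma} \in \mathrm{Aut}_S(\mathcal{X})$ is an $S$-section of this group scheme, and two sections of a separated unramified $S$-scheme that coincide at the closed point of the local base $S$ coincide everywhere. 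Restricting the equality $\widetilde{\sigma} = \mathrm{id}_{\mathcal{X}}$ back to the generic fibre yields $\sigma = \mathrm{id}_{\mathcal{X}_\eta}$.

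The main obstacle, and the step I would spend the most care on, is the unramifiedness (equivalently, infinitesimal rigidity) of $\underline{\mathrm{Aut}}(\mathcal{X}/S)$ at the closed fibre. This rests on two inputs: finiteness of $\mathrm{Aut}_k(\mathcal{X}_s)$, which uses $g \geq 2$ together with the nodal nature of stable curves, and the vanishing of the appropriate deformation-theoretic tangent space (morally $H^0$ of the tangent sheaf, with the standard modification at the nodes for a stable curve). Once this infinitesimal rigidity is in place, the rest of the argument is formal scheme-theory. Alternative, more elementary approaches that avoid the full representability result reduce to showing directly that a non-trivial automorphism of a stable curve of genus $\geq 2$ cannot act trivially on every irreducible component simultaneously, combined with specialisation; but the group-scheme viewpoint is the cleanest, and it also foreshadows the function-field translation leading to Lemma \ref{thm4} and Theorem \ref{thmT}.
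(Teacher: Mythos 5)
Your plan is sound, but note that the paper does not actually prove this theorem: its ``proof'' is a citation to Deligne--Mumford (Lemma I.12) and to Liu (Proposition 10.3.38). Your argument is essentially a reconstruction of the Deligne--Mumford route: the extension step via uniqueness of the stable model is correct (and the uniqueness of the extension $\widetilde{\sigma}$ follows because $\mathcal{X}$ is separated and the generic fibre is schematically dense), and the injectivity step via the finite unramified group scheme $\underline{\mathrm{Aut}}(\mathcal{X}/S)$ is the standard rigidity argument --- two sections of a separated unramified $S$-scheme agreeing at the closed point of the local base $S$ agree everywhere, since their equalizer is open and closed. You correctly isolate the real content, namely unramifiedness of the automorphism functor (vanishing of $H^0$ of the sheaf of derivations for a stable curve of genus $\geq 2$), but you assert it rather than prove it; that puts your write-up on the same footing as the paper's citation, so it is acceptable here though not self-contained. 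Two remarks on the comparison. First, your ``equivalently'' for the graph-closure argument is too quick: the scheme-theoretic closure of $\Gamma_\sigma$ in $\mathcal{X}\times_S\mathcal{X}$ is a priori only a correspondence, and showing the projections are isomorphisms needs more than the valuative criterion (e.g.\ normality of $\mathcal{X}$ plus Zariski's main theorem, or minimality); the uniqueness-of-stable-model argument is the one to keep. Second, the alternative proof in Liu --- the one the paper actually imitates later in Lemma \ref{thm4} --- is quite different and more elementary: given a prime-order $\sigma$ acting trivially on $\mathcal{X}_s$, one compares arithmetic genera of $\mathcal{X}$ and of the quotient model and derives a contradiction with $g\geq 2$ via the Hurwitz genus formula. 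Your group-scheme approach is cleaner and more conceptual, but the genus-comparison approach is the one that generalizes to the non-discrete, non-good-reduction setting of Section 3, which is why the author follows it there.
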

\begin{proof}
See \cite{delimum} Lemma \textbf{ I.12} and \cite{liu} Proposition 10.3.38.
\end{proof}

\

It is well known that the following two categories are equivalent (see \cite{robin} Corollary 6.12.):
\begin{itemize}
\item[(\textit{i})] Smooth projective curves over $k$, and dominant morphisms;
\item[(\textit{ii})]Function fields of one variable over $k$, and $k$-homomorphisms.
\end{itemize}
Therefore, Theorem \ref{thm21}, together with Theorem \ref{thmb3}, implies the following result:
\begin{cor}\label{cor21}
Let $(\F{|}k,v)$ be a valued function field where the valuation $v$ is a constant prolongation (prolongation which is a constant reduction) on $\F$ of the valuation on $k$ which corresponds to the discrete valuation ring $\mathcal{O}_{k}$ above. We assume that $v$ is a good reduction. Then, there exists a natural injective homomorphism  
\begin{equation}
\phi:\mathrm{Aut}(\F{|}k)\hookrightarrow\mathrm{Aut}(\F v{|}k v)
\end{equation}
 where $\F v{|}k v$ denotes the residue function field.
\end{cor}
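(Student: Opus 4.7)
My plan is to reduce the statement to the geometric Deligne--Mumford theorem (Theorem \ref{thm21}) by passing through the equivalence of categories recalled immediately before the corollary, and then to check that the resulting injection is indeed the canonical reduction map at the function field level.

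First I pass to the curve side: let $X$ be the unique smooth projective irreducible curve over $k$ with function field $\F$, so that $\Aut_k(X) \simeq \Aut(\F|k)$. Since the genus satisfies $g \geq 2$ by the standing assumption of the paper, Theorem \ref{thmb3} furnishes a stable model $\mathcal{X} \to S = \mathrm{Spec}(\mathcal{O}_k)$ with $\mathcal{X}_\eta \simeq X$. By the good reduction hypothesis, the special fibre $\mathcal{X}_s$ is smooth over $kv$, hence is the unique smooth projective model of its function field; this function field coincides with $\F v$ precisely because $v$ is a constant prolongation, so that $\F v|kv$ is again a function field in one variable. Applying Theorem \ref{thm21} then yields
\begin{equation*}
\phi:\ \Aut(\F|k)\ \simeq\ \Aut_k(\mathcal{X}_\eta)\ \hookrightarrow\ \Aut_k(\mathcal{X}_s)\ \simeq\ \Aut(\F v|kv),
\end{equation*}
which is the desired injective homomorphism.

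The step that requires the most care, and which I regard as the main obstacle, is to verify that $\phi$ agrees with the naturally defined reduction homomorphism on the function field side: each $\sigma \in \Aut(\F|k)$ should preserve the valuation ring $\mathcal{O}$ of $v$, and $\phi(\sigma)$ should be the automorphism $\overline{\sigma}$ of $\F v$ obtained by reducing modulo the maximal ideal of $\mathcal{O}$. Preservation of $\mathcal{O}$ is not tautological; it follows from uniqueness of a good constant reduction above $v|_k$, since $v \circ \sigma$ has the same restriction to $k$ as $v$ and is itself a good reduction, forcing $v \circ \sigma = v$. The agreement of $\phi(\sigma)$ with $\overline{\sigma}$ is then a consequence of the functoriality of reduction of stable models: $\sigma$ extends uniquely to an $S$-automorphism of $\mathcal{X}$ (this is exactly the first assertion of Theorem \ref{thm21}), and the residue map on local rings of $\mathcal{X}$ intertwines this extension with the automorphism it induces on $\mathcal{X}_s$, which under the categorical identification is $\overline{\sigma}$. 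Once these compatibilities are in place the corollary is established.
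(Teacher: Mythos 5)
Your proposal follows essentially the same route as the paper: pass to the associated smooth projective curve via the equivalence of categories, invoke Theorem \ref{thmb3} for the stable model, and apply Theorem \ref{thm21} together with the identifications $\Aut_k(\mathcal{X}_\eta)=\Aut(\F|k)$ and $\Aut_k(\mathcal{X}_s)=\Aut(\F v|kv)$. Your additional verification that the resulting injection coincides with the natural reduction homomorphism on valuation rings is a welcome refinement the paper leaves implicit, but the argument is the same.
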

Indeed, we can consider the smooth projective curve $X$ over $k$ associated to the function field $\F{|}k$ via the equivalent categories described above. Using Theorem \ref{thmb3}, there exists a stable curve $\mathcal{X}$ over $S$ which is a model of $X.$ The result follows immediately using Theorem \ref{thm21} and the fact that $\mathrm{Aut}_k(\mathcal{X}_\eta)=\mathrm{Aut}_k(X)=\mathrm{Aut}(\F{|}k)$ and $\mathrm{Aut}_k(\mathcal{X}_s)=\mathrm{Aut}(\F v{|}k v)$ where $\eta$ and $s$ are respectively the generic point and the closed point of the affine scheme $S.$

\

One natural question to ask is whether Corollary \ref{cor21} is still true for good non-discrete valuations. Following a suggestion and earlier work done by Roquette, Knaf has considered this question and got a positive answer in \cite{knaf}. Here, we generalise the result to the case where the valuation $v$ is not assumed to be good reduction nor discrete.

\section{A generalisation of the Deligne-Mumford Theorem}

Let $\F{|}k$ be a function field over the field of constants $k.$ Let us assume that the field $k$ is equipped with a valuation $v_{k}.$ Here, the valuation $v_k$ is not necessarily discrete. Let $v$ be a constant prolongation of $v_k$ to $\F.$  Denote by $p$ the characteristic of the residue field $k v.$ 

Denote respectively by $G$ and $\E$ the automorphism group $\Aut(\F{|}k)$ and the fixed field of $G$ in $\F.$ Since the group $G$ is finite as the genus $g\geq2,$ the set $V$ of prolongations of $v_\E{:=}v{|}_\E$ to $\F$ is finite of cardinal $t\geq1$. 

\

Note that we have,
$$\mathcal{O}'_{v_\E}=\displaystyle{\bigcap_{\mathcal{O}\in\A}}\mathcal{O} $$ where $\A$ denotes the set of the valuation rings of $\F$ which lie over the valuation ring $\mathcal{O}_v\cap\E$ of $\E.$

\

Let $\mathcal{O}\in{\A}$ and $$Z(\mathcal{O}):=\left\lbrace \sigma\in G|\sigma(\mathcal{O})=\mathcal{O}\right\rbrace $$ be \textit{the decomposition group} of $\mathcal{O}$ over $\E.$ The map
$$G\ni\sigma\mapsto\sigma \mathcal{O} \in \A$$ induces a bijection from $G/Z(\mathcal{O})$ into $\A.$ By definition, for any $\sigma\in G,$ we have $\sigma Z(\mathcal{O})\sigma^{-1}\subseteq Z(\sigma \mathcal{O})$ and $\sigma^{-1}Z(\sigma \mathcal{O})\sigma\subseteq Z(\mathcal{O}).$  So for any $\sigma\in G,$
\begin{equation}
Z(\sigma \mathcal{O})=\sigma Z(\mathcal{O}) \sigma^{-1}.
\end{equation}

The next proposition was inspired from \cite{endler}.
\begin{pro}\label{thmm}
Let $\pi$ be a place corresponding to $\mathcal{O}.$ There is a natural homomorphism 
\begin{equation}
\phi_{\pi}: Z(\mathcal{O})\rightarrow \mathrm{Aut}(\mathbb{F}v{\mid}\E v_\E)\hookrightarrow \mathrm{Aut}(\mathbb{\F}v{\mid}k v)
\end{equation}
defined for any $\sigma\in Z(\mathcal{O})$ by $$\pi\circ\sigma=\phi_{\pi}(\sigma)\circ \pi.$$ 
Moreover, if we denote by $T(\mathcal{O})$ its kernel called \textit{the inertia group} of $\mathcal{O}$ over $\E$, then we have:

\begin{enumerate}
\item[i.] ${T}(\mathcal{O})=\left\lbrace \sigma \in G \mid \sigma(x)-x\in \mathcal{M}_\mathcal{O}, \text{for all} \ x\in \mathcal{O}\right\rbrace;$
 \item[ii.] ${T}(\sigma(\mathcal{O}))=\sigma G^{T}(\mathcal{O}) \sigma^{-1}$ for all $\sigma$ in $G.$
 \item[iii.] ${T}(\mathcal{O})$ is a $p$-group and the extension $\F v{|}\F^{T(\mathcal{O})} v$ is purely inseparable where $\F^{T(\mathcal{O})}$ is the fixed field of ${T}(\mathcal{O})$ in $\F.$
 \end{enumerate}
\end{pro}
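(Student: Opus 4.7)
The plan is to treat $\phi_{\pi}$ as the map induced by reducing modulo $\mathcal{M}_\mathcal{O}$. First I would verify well-definedness: any $\sigma \in Z(\mathcal{O})$ stabilizes $\mathcal{O}$ and hence $\mathcal{M}_\mathcal{O}$, so it induces a ring automorphism $\bar{\sigma}$ of $\F v = \mathcal{O}/\mathcal{M}_\mathcal{O}$; because $\sigma$ fixes $\E$ pointwise (as $\E$ is the fixed field of $G$) the reduction $\bar{\sigma}$ fixes $\E v_\E$ pointwise, hence lies in $\Aut(\F v | \E v_\E) \hookrightarrow \Aut(\F v | kv)$. Setting $\phi_\pi(\sigma) := \bar{\sigma}$ satisfies the required relation $\pi \circ \sigma = \phi_{\pi}(\sigma) \circ \pi$ by construction, and functoriality of the reduction immediately gives the homomorphism property.

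Items (i) and (ii) are then formal. For (i), by definition $\ker \phi_\pi = \{\sigma \in Z(\mathcal{O}) : \sigma(x)-x \in \mathcal{M}_\mathcal{O}\ \forall x \in \mathcal{O}\}$; to enlarge the ambient group from $Z(\mathcal{O})$ to $G$ as in the statement, one observes that the congruence already forces $\sigma(\mathcal{O}) \subseteq \mathcal{O}$, and since $G$ is finite, the inverse $\sigma^{-1} = \sigma^{n-1}$ satisfies the same condition, yielding $\sigma\mathcal{O} = \mathcal{O}$. For (ii), a direct computation: if $\tau \in T(\mathcal{O})$ and $y = \sigma(x) \in \sigma\mathcal{O}$, then $\sigma\tau\sigma^{-1}(y) - y = \sigma(\tau(x)-x) \in \sigma \mathcal{M}_\mathcal{O} = \mathcal{M}_{\sigma\mathcal{O}}$, whence $\sigma T(\mathcal{O})\sigma^{-1} \subseteq T(\sigma\mathcal{O})$; symmetry gives equality.

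Part (iii) is the substantive statement, and I would reduce it to the classical Galois theory of valued fields. By Artin's theorem, since $G$ is finite and $\E$ is its fixed field in $\F$, the extension $\F|\E$ is Galois with group $G$. The standard decomposition-inertia analysis of a Galois extension of valued fields (see e.g.\ Endler) then shows that the residue extension $\F v | \E v_\E$ is normal and that $\F^{T(\mathcal{O})} v$ is precisely its maximal separable subextension; in particular $\F v | \F^{T(\mathcal{O})} v$ is purely inseparable. For the $p$-group assertion I would use the generalised fundamental equality for the Galois extension, which yields $|T(\mathcal{O})| = e \cdot [\F v : \E v_\E]_i \cdot d$, where $e = [v\F : v\E]$, $[\cdot]_i$ denotes inseparable degree, and $d$ is the Ostrowski defect (always a $p$-power). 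The main obstacle is to show that $e = 1$: this uses that $v$ is a constant prolongation, so $\F v | kv$ has transcendence degree $1$; the same then holds for the intermediate field $\E v_\E$ (since $\F v | \E v_\E$ is finite, hence algebraic), and the equality case of Abhyankar's inequality applied to both $(\F|k,v)$ and $(\E|k,v_\E)$ forces $v\F = v\E = v_{k}k$ (the quotient $v\F/v_k k$ is finitely generated torsion-free of rational rank $0$, hence trivial). With $e = 1$, all three factors of $|T(\mathcal{O})|$ are $p$-powers and $T(\mathcal{O})$ is a $p$-group.
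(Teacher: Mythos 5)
Your proposal is correct and follows essentially the same route as the paper: items i and ii are the same formal computations (the paper closes the argument in i by noting $\sigma(\mathcal{O})\in\A$ and $\sigma(\mathcal{O})\subseteq\mathcal{O}$ force equality, where you use finiteness of the order of $\sigma$; both work), and item iii rests, exactly as in the paper, on classical decomposition--inertia theory together with the fact that $v$ is unramified over $v_\E$. The only genuine difference is the packaging of iii: the paper identifies $T(\mathcal{O})$ with the ramification group $V(\mathcal{O})$ (trivial ramification forces $T(\mathcal{O})/V(\mathcal{O})$ to vanish) and quotes Endler's theorem that $V(\mathcal{O})$ is the $p$-Sylow subgroup of $T(\mathcal{O})$, whereas you invoke the fundamental equality $|T(\mathcal{O})|=e\cdot[\F v:\E v_\E]_i\cdot d$ together with the characterisation of $\F^{T(\mathcal{O})}v$ as the maximal separable subextension of $\F v{|}\E v_\E$; these are two faces of the same theory and either citation suffices. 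You are in fact more explicit than the paper on the one point it leaves as a one-line assertion, namely why $e=1$. Your Abhyankar argument is the right one, but note a small slip: the quotient $v\F/v_kk$ is not torsion-free for free. Abhyankar gives rational rank $0$, hence the quotient is torsion (and finite, being finitely generated); to conclude it is trivial one must use that $v_kk$ is divisible because $k$ is algebraically closed, together with torsion-freeness of the ordered group $v\F$, so that $n\gamma\in v_kk$ implies $\gamma\in v_kk$. With that one line inserted, your claim that the quotient is torsion-free (hence trivial) is justified and the argument is complete.
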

\begin{proof}

\begin{enumerate}
\item[i.] Let $\sigma\in Z(\mathcal{O})$ such that $\phi_\mathcal{O}(\sigma)=\mathrm{Id}_{\F v}.$ Then $[\phi_{\pi}(\sigma)](\pi(x))=\pi(\sigma(x))=\pi(x)$ for any $x\in \mathcal{O},$ therefore, $\sigma(x)-x\in \mathcal{M}_\mathcal{O}.$ Now, if for all $x$ in $\mathcal{O},$ we have $\sigma(x)-x\in \mathcal{M}_\mathcal{O}$ ($\sigma\in G$) which implies $$\left[ \pi(\sigma(x)-x)=0 \Leftrightarrow (\phi_\pi(\sigma))(\pi(x))=\pi(x)\right], $$ then $\phi_\pi(\sigma)=\mathrm{Id}_{\mathbb{F}v}, \sigma(\mathcal{O})\subseteq \mathcal{O}$ and $\sigma\in Z(\mathcal{O})$ since $\sigma(\mathcal{O})\in\A.$ This proof is the same as the one in \cite{endler} 19.1 c).
\item[ii.] By definition of $\pi,$ the corresponding place for $\sigma \mathcal{O}$ is just $\pi\circ\sigma^{-1}.$ So for any $\tau\in T(\mathcal{O})$ and $x\in\sigma \mathcal{O},$ we have:
$$\pi\circ\sigma^{-1}\left(\sigma\tau\sigma^{-1}(x)-x\right)=\pi\left(\tau\sigma^{-1}(x)-\sigma^{-1}(x)\right).$$ Since $x\in\sigma \mathcal{O},$ then $\sigma^{-1}(x)$ is in $\mathcal{O}.$ Using i. and the fact that $\tau$ is an element of $T(\mathcal{O}),$ we conclude that $\left(\tau\sigma^{-1}(x)-\sigma^{-1}(x)\right)$ is in $\mathcal{M}_\mathcal{O}.$ Therefore,  $\sigma\tau\sigma^{-1}(x)-x\in \sigma\mathcal{M}_\mathcal{O}.$ Hence,  $\sigma T(\mathcal{O})\sigma^{-1}\subseteq T(\sigma(\mathcal{O}))$ for any $\sigma$ in $G.$ Conversely, we have $\sigma^{-1} T(\sigma(\mathcal{O}))\sigma\subseteq T(\mathcal{O}).$
\item[iii-] The field $k$ is algebraically closed, so $v$ is unramified and the ramification group ${V}(\mathcal{O})$ coincides with the inertia group. Furthermore, $V(\mathcal{O})$ is the $p$-Sylow subgroup of $T(\mathcal{O})$ (\cite{endler} Theorem 20.18). Thus, $T(\mathcal{O})$ is a $p$-group and  $\F v{|}\F^{T(\mathcal{O})} v$ is purely inseparable.
\end{enumerate}
\end{proof}

\

Let us now define the $\mathcal{O}_{k}$-curve associate to the set of valuations $V$ as described in \cite{b3}. For that, let us make some convention of notations:

\underline{$\mathbf{Notations}$}:
\begin{itemize}
\item $\mathcal{R}_f:=(\mathcal{O}_{k}\left[f\right])'=R_f\cap \mathcal{O}_w;$
\item $R_f:=(k\left[f\right])'=\mathcal{R}_f\otimes_{\mathcal{O}_{k}}k;$
\item Since $f^{-1}$ is also $V$-regular, we define in the same way the rings $\mathcal{R}_{f^{-1}}$ and $R_{f^{-1}};$
\item $R_{fw}:=(k w\left[fw\right])'.$
\end{itemize}
 Recall that $w$ denotes the infnorm associated to $V$ (See \cite{b2}). The $\mathcal{O}_{k}$-curve, say $\mathcal{C}_f,$ is defined to be the $\mathcal{O}_{k}$-scheme 
 $$\mathcal{C}_f:=\mathrm{Spec}\mathcal{R}_f\cup\mathrm{Spec}\mathcal{R}_{f^{-1}}$$ obtained by glueing the affine $\mathcal{O}_{k}$-schemes $\mathrm{Spec}\mathcal{R}_{f}$ and $\mathrm{Spec}\mathcal{R}_{f^{-1}}$ along $\mathrm{Spec}(\mathcal{O}_{k}\left[f,f^{-1}\right])'.$
 
 \begin{thm}\label{thmg}
 The $\mathcal{O}_k$-curve $\mathcal{C}_f$ has the following properties:
 
 \begin{enumerate}
 
  \item The $\mathcal{O}_{k}$-curve depends only on $V$ in the following sense: 
  
  If $g$ is an another $V$-regular element for $\F{|}k,$ the corresponding $\mathcal{O}_k$-curve, $\mathcal{C}_g,$ is $\mathcal{O}_k$-isomorphic to $\mathcal{C}_f.$ We denote the curve by $\mathcal{C}_V.$
  \item The  $\mathcal{O}_k$-curve $\mathcal{C}_V$ is a projective integral normal flat $\mathcal{O}_k$-scheme of pure relative dimension $1.$ More precisely:
 $$\mathcal{C}_V\cong\mathrm{Proj}S$$ where 
 $$S=\bigoplus_{n\geq0}{\mathcal{L}(nD)\cap\mathcal{O}_w}$$ and $D$ is a pole divisor of a $V$-regular element for $\F{|}k.$ 
 \item The generic fibre of $\mathcal{C}_V$ is $k$-isomorphic to the non-singular irreducible projective curve $C$ associated to the function field $\F.$
 \end{enumerate}
 \end{thm}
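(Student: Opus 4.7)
My plan is to prove item 2 first via the $\mathrm{Proj}$ description, since items 1 and 3 will then follow as essentially formal consequences. Fix a $V$-regular element $f$ with pole divisor $D$ on the smooth projective curve associated to $\F{|}k$. Set $S_n := \mathcal{L}(nD) \cap \mathcal{O}_w$ and $S := \bigoplus_{n \geq 0} S_n$. This is a graded $\mathcal{O}_k$-subalgebra of the classical section ring $\bigoplus_n \mathcal{L}(nD)$: the containment $\mathcal{L}(nD) \cdot \mathcal{L}(mD) \subseteq \mathcal{L}((n+m)D)$ is standard, and the infnorm inequality $w(ab) \geq w(a) + w(b)$ gives $S_n \cdot S_m \subseteq S_{n+m}$. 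By Riemann--Roch over $k$, $\mathcal{L}(nD)$ is very ample for $n$ sufficiently large, so $\mathrm{Proj}\, S$ is covered by standard affines $D_+(s) = \mathrm{Spec}\, S_{(s)}$. The crucial step is to identify the two principal charts $S_{(1)}$ and $S_{(f)}$ (with $f$ placed in degree one) with $\mathcal{R}_f$ and $\mathcal{R}_{f^{-1}}$ respectively, verifying that each degree-zero localisation coincides with the corresponding arithmetic integral closure $R_{\cdot} \cap \mathcal{O}_w$, and that the gluing over $\mathrm{Spec}(\mathcal{O}_k[f,f^{-1}])'$ matches the one defining $\mathcal{C}_f$. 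Integrality then follows from $S \subset \F$, normality from each $\mathcal{R}_{\cdot}$ being integrally closed in $\F$, flatness from torsion-freeness of these rings as $\mathcal{O}_k$-modules, and projectivity is built into the $\mathrm{Proj}$ construction.

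For item 1, once the $\mathrm{Proj}$ description is in place, independence of the choice of $V$-regular element reduces to a standard comparison-of-section-rings argument: if $g$ is another $V$-regular element with pole divisor $D'$, then $\mathrm{Proj}\bigoplus_n (\mathcal{L}(nD) \cap \mathcal{O}_w)$ and $\mathrm{Proj}\bigoplus_n (\mathcal{L}(nD') \cap \mathcal{O}_w)$ can be compared through a common graded ring built from a sufficiently high multiple of $D + D'$, yielding a canonical $\mathcal{O}_k$-isomorphism $\mathcal{C}_f \cong \mathcal{C}_g$. For item 3, tensoring $\mathcal{R}_f$ with $k$ over $\mathcal{O}_k$ yields $R_f = (k[f])'$, which is the affine chart on which $f$ is regular on the curve associated to $\F{|}k$; gluing the two generic charts $\mathrm{Spec}\, R_f \cup \mathrm{Spec}\, R_{f^{-1}}$ reconstructs that curve and simultaneously gives pure relative dimension $1$.

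The main obstacle I foresee lies in the non-Noetherian setting: when $v_k$ is not discrete, $\mathcal{O}_k$ and $\mathcal{O}_w$ need not be Noetherian, so classical projective model arguments relying on finite generation of each $S_n$ over $\mathcal{O}_k$ must be replaced by something more hands-on. Likewise, the identification $S_{(f)} = \mathcal{R}_{f^{-1}}$ is not automatic because intersecting with $\mathcal{O}_w$ interacts subtly with dividing by powers of $f$ at zeros and poles of $f$. Resolving these points will require genuine use of the hypotheses that $v$ is a constant reduction (so that $\F v{|}k v$ has transcendence degree one) and that $w$ is controlled by the finite set $V$, together with a careful treatment of the $\mathcal{R}_{\cdot}$ as stalk-wise integral closures glued along the valuations in $V$.
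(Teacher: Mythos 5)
The paper does not prove this theorem at all: it is quoted from Green--Matignon--Pop, \emph{On valued function fields III}, Theorem 1.1, and the proof given is simply that citation. Your outline is consistent with the strategy of that source (construct the model as $\mathrm{Proj}$ of the graded ring of $w$-integral Riemann--Roch spaces, identify the two affine charts with $\mathcal{R}_f$ and $\mathcal{R}_{f^{-1}}$, and read off integrality, normality, flatness and the generic fibre), so as a roadmap it is pointed in the right direction. However, as a proof it has genuine gaps, and you have in effect flagged them yourself: the passage beginning ``The main obstacle I foresee'' defers exactly the steps that carry the mathematical content, so what remains is a plan rather than an argument.

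Concretely, three things are asserted but not established. First, the identification of charts: you need that $1$ and $f$, viewed in degree one, cut out the irrelevant ideal up to radical so that $D_+(1)\cup D_+(f)$ covers $\mathrm{Proj}\,S$, and you need both inclusions in $S_{(f)}=\mathcal{R}_{f^{-1}}$; the nontrivial direction is that every $g\in R_{f^{-1}}\cap\mathcal{O}_w$ can be written as $s/f^n$ with $s\in\mathcal{L}(nD)\cap\mathcal{O}_w$, which uses that a $V$-regular $f$ satisfies $v(f)=0$ for every $v\in V$ so that multiplication by $f^n$ preserves $w$-integrality. This is provable but is precisely the step you label as ``not automatic'' and leave open. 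Second, for item 1 the ``common graded ring built from a sufficiently high multiple of $D+D'$'' argument requires the multiplication maps between the truncated section rings to remain surjective \emph{after} intersecting with $\mathcal{O}_w$; this is where the hypothesis that $v$ is a constant reduction (so that the lattices $\mathcal{L}(nD)\cap\mathcal{O}_w$ reduce onto full Riemann--Roch spaces of the residue function field) is genuinely used, and no argument is offered. Third, ``pure relative dimension $1$'' is a statement about the special fibre, not the generic one; your treatment of item 3 only reconstructs the generic fibre, and nothing in the proposal shows that the closed fibre of $\mathrm{Spec}\,\mathcal{R}_f\cup\mathrm{Spec}\,\mathcal{R}_{f^{-1}}$ is a curve over $kv$ rather than something of the wrong dimension. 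Since the paper itself relies on the published proof, the honest options are either to cite it as the paper does or to carry out these three verifications in full; the outline as written does neither.
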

 \begin{proof}
 \cite{b3} Theorem 1.1.
 \end{proof}
 
 \
 
Our first important result is the following lemma:

\begin{lem}\label{thm4}
Let $(\F{|}k,v)$ be a valued function field in one variable. Let $G{=}\mathrm{Aut}(\F{|}k).$ If $H$ is a subgroup of $G$ such that the extension $\F v{|}\F^{H}v$ is purely inseparable, then $H$ is trivial.
\end{lem}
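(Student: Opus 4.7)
The plan is to apply Proposition \ref{thmm} to the Galois sub-extension $\F|\F^H$ and then use the structure of the flat $\mathcal{O}_k$-model from Theorem \ref{thmg} together with genus estimates to rule out $|H|>1$.

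First, I would observe that $\F|\F^H$ is a finite Galois extension of function fields with group $H$, and that the restriction $v|_{\F^H}$ is itself a constant prolongation of $v_k$ (the residue field $\F^H v$ lies between $kv$ and $\F v$, so its transcendence degree over $kv$ must be one). Applying the analogue of Proposition \ref{thmm} to the extension $\F|\F^H$ at the chosen prolongation $\mathcal{O}$ of $v|_{\F^H}$ yields a decomposition group $Z_H := \{\sigma \in H : \sigma\mathcal{O}=\mathcal{O}\}$ and an inertia group $T_H \subseteq Z_H$, with $T_H$ a $p$-group, $\F v|\F^{T_H}v$ purely inseparable, and a canonical map $Z_H \to \mathrm{Aut}(\F v|\F^H v)$ whose kernel is $T_H$. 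Since a purely inseparable extension admits no non-trivial automorphisms, $\mathrm{Aut}(\F v|\F^H v) = \{1\}$, forcing $Z_H = T_H$; likewise the maximal separable subextension $\F^{T_H}v|\F^H v$ sitting inside the purely inseparable $\F v|\F^H v$ must be trivial, so $\F^{T_H}v = \F^H v$.

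Next, I would show $H = Z_H$, i.e.\ that $v$ is the unique prolongation of $v|_{\F^H}$ to $\F$. This is where the integral flat $\mathcal{O}_k$-model $\mathcal{C}_V$ of Theorem \ref{thmg} enters essentially: the $H$-orbit of $\mathcal{O}$ is contained in the finite set $V$ of prolongations of $v|_\E$ to $\F$, and the finite morphism $\mathcal{C}_V \to \mathcal{C}_{V_H}$ of integral flat models has generic degree $|H|$, which by flatness must equal the special-fibre degree; combining this with the residue degree $[\F v : \F^H v]$ being a $p$-power under the purely inseparable hypothesis collapses the $H$-orbit of $\mathcal{O}$ to a single point.

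Having reduced to $H = T_H$ a $p$-group realizing $\F|\F^H$ as purely wildly ramified at the unique prolongation $v$, the final step combines three inputs: Deuring's genus inequalities $g(\F v) \leq g(\F)$ and $g(\F^H v) \leq g(\F^H)$; genus invariance $g(\F v) = g(\F^H v)$ under the purely inseparable residue extension over the perfect base $kv$ (so that Frobenius gives an isomorphism of function fields); and the Riemann--Hurwitz estimate $2g(\F)-2 \geq |H|(2g(\F^H)-2)$ for the Galois cover $\F|\F^H$. Together with the standing hypothesis $g(\F) \geq 2$ and the further Riemann--Hurwitz constraint $(1-[\F v:\F^H v])(2g(\F v)-2) \geq 0$ forcing $g(\F v) \leq 1$ whenever the residue extension is non-trivial, these combine to yield $|H| = 1$.

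The main obstacle is the middle step: abstract valuation theory alone gives only $Z_H = T_H$, not $Z_H = H$, and the passage from the purely inseparable residue condition to the uniqueness of the prolongation appears to require genuine use of the flat model $\mathcal{C}_V$ and the compatibility of its generic and special fibres.
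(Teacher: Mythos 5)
Your opening step (applying the decomposition/inertia formalism of Proposition \ref{thmm} to $\F|\F^H$ and noting that a purely inseparable extension has trivial automorphism group, so $Z_H=T_H$) is correct but does not advance the argument, and the two subsequent steps both have genuine gaps. The fatal one is the last step: the inequalities you invoke all point the wrong way. Deuring's inequalities $g(\F v)\leq g(\F)$ and $g(\F^H v)\leq g(\F^H)$ bound the residue genera from \emph{above}, so together with $g(\F v)=g(\F^H v)$ and Riemann--Hurwitz they give no lower bound on $g(\F^H)$ and hence no bound on $|H|$: for instance $g(\F)=2$, $g(\F^H)=0$, $g(\F v)=g(\F^H v)=0$ satisfies everything you list with $|H|$ arbitrary. (The auxiliary inequality $(1-[\F v:\F^H v])(2g(\F v)-2)\geq 0$ is not a valid form of Riemann--Hurwitz for an inseparable extension, and even if granted it would only yield $g(\F v)\leq 1$, which is entirely possible under bad reduction and contradicts nothing.)

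The missing idea is the mechanism that converts the purely inseparable hypothesis into the \emph{reverse} inequality $g(\F^H)\geq g(\F)$; this is where the flat model must be used, and it is used differently from how you use it. By constancy of the Euler--Poincar\'e characteristic in the flat family $\mathcal{C}_v$ one gets $g_C=g_{Cv}=g_{\overline{C}}+\delta$, where $\overline{C}$ is the normalisation of the special fibre and $\delta$ its singularity number, and similarly $g_{C^r}=g_{\overline{C^r}}+\delta^r$ for the quotient curve. Pure inseparability of $\F v|\F^{H}v$ gives $g_{\overline{C^r}}=g_{\overline{C}}$, and the inclusions $\mathcal{R}^r_f v\subseteq\mathcal{R}_f v\subseteq R_{fv}$ give $\delta^r\geq\delta$, whence $g(\F^H)=g_{\overline{C^r}}+\delta^r\geq g_{\overline{C}}+\delta=g(\F)\geq 2$, which contradicts Riemann--Hurwitz for the separable Galois cover $\F|\F^H$ of degree $>1$. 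Note also that your middle step ($H=Z_H$, i.e.\ uniqueness of the prolongation) is both unestablished and unnecessary: flatness equates the generic degree with the \emph{total} degree of the special fibre, which is a sum over all points lying above, and a $p$-power residue degree at each such point does not force the $H$-orbit of $\mathcal{O}$ to be a single point; the paper's argument works with a single element $\sigma\in H$ and a single valuation $v$ and never needs this reduction.
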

\begin{proof}
Let $\sigma$ be an element of $H$ of order $>1.$  Denote by $\langle\sigma\rangle$ the subgroup of $H$ generated by $\sigma.$ Choose a regular transcendental element $f$ for the valuation $v.$ Let $\mathcal{C}_v$ be the $\mathcal{O}_k$-curve associated to $\left\lbrace v\right\rbrace .$ So the generic fibre of $\mathcal{C}_v$ is isomorphic to the curve
 $$C:=\mathrm{Spec}{R}_f\cup\mathrm{Spec}{R}_{f^{-1}}$$ which is the unique smooth projective curve with $\F$ as function field (Theorem \ref{thmg}). On the other hand, the closed fibre of $\mathcal{C}_v$ is isomorphic to the curve
 $$Cv:=\mathrm{Spec}\mathcal{R}_f v\cup\mathrm{Spec}\mathcal{R}_{f^{-1}}v=\mathcal{C}{\times} \left( k v\right) $$ which has $\F v$ as function field (but may have singularities in case $v$ is not a good reduction). Let us consider the smooth projective curve
 $$\overline{C}:=\mathrm{Spec}R_{fv}\cup\mathrm{Spec}R_{f^{-1}v}$$ which is the normalisation of $Cv.$
 
 Then we have:
 \begin{equation}
 g_{Cv}=g_{\overline{C}}+\delta,
 \end{equation}
where $\delta$ is the \textit{singularity number} (see \cite{liu} Propostion 7.5.4). And we have: 
$$\delta=\mathrm{dim}_{k v}\left(R_{fv}/\mathcal{R}_f v\right)+\mathrm{dim}_{k v}\left(R_{f^{-1}v}/\mathcal{R}_{f^{-1}} v\right).$$
Since the Euler-Poincar\'e characteristic does not change under reduction and $k$ is algebraically closed, we conclude that the curves $C$ and $Cv$ have the same arithmetic genus, i.e,
\begin{equation}
 g_{Cv}=g_{\overline{C}}+\delta=g_C.
 \end{equation}
 Furthermore, the extension $\F v{|}\F^{\langle\sigma\rangle} v$ is purely inseparable which implies 
 $$g_{\overline{C^r}}=g_{\overline{C}} \ (\cite{stich} \ \text{3.10})$$ where $C^r$ denotes the restriction of $C$ on $\F^{\langle\sigma\rangle}.$ The curve $\overline{C^r}$ is the normalization of the reduction of the curve $C^r.$ 
 
On the other hand, we have
 
 \begin{equation}
 g_{C^r}=g_{\overline{C^r}}+\delta^r,
 \end{equation}
 where
 $$\delta^r=\mathrm{dim}_{k v}\left(R_{fv}/\mathcal{R}^r_f v\right)+\mathrm{dim}_{k v}\left(R_{f^{-1}v}/\mathcal{R}^r_{f^{-1}} v\right)$$ and $\mathcal{R}^r_f v=R_{fv}\cap\E v$. The ring $\mathcal{R}^r_{f^{-1}} v$ is also defined in the same way as $\mathcal{R}^r_f v.$
 Since, $$\mathcal{R}^r_fv\subseteq\mathcal{R}_fv\subseteq R_{fv},$$ we conclude that $\delta^r\geq \delta.$ Therefore,
 $$g_{\F^{\langle\sigma\rangle}}=g_{C^r}=g_{\overline{C^r}}+\delta^r\geq g_{\overline{C}}+\delta=g_{\overline{C^r}}+\delta=g_C=g_{\F}.$$
 This can only happen if $g_{\F^{\langle\sigma\rangle}}=g_{\F}=0$ or $1$ by the Hurwitz genus formula. Thus, $g_{\F^{\langle\sigma\rangle}}=g_{\F}$ and $\F^{\langle\sigma\rangle}=\F.$ This contradicts the fact that the extension $\F{|}\F^{\langle\sigma\rangle}$ is Galois, hence, separable. Thus, $H$ is the trivial subgroup.
\end{proof}

\

Observe that:

\begin{rem}
In the proof of Lemma \ref{thm4}, we use the same technique as in the proof of Theorem \ref{thm21} in \cite{liu} (Proposition 10.3.38.). But here, we are not restricted to the case of stable reduction. We made the proof more general using directly the $\mathcal{O}_k$-curve associated to the valuation $v.$
\end{rem}

\

Now, we are able to prove our generalisation of the Deligne-Mumford theorem:

\begin{thm} \label{thmT}
The homomorphism $\phi_{\pi}$ defined above is injective. More precisely, we have $$Z(\mathcal{O})\simeq\Aut(\F v{|}\E v_\E)$$ where $\E$ is the fixed field of $G.$
\end{thm}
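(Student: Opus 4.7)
The plan is to leverage the structural lemma just proved, together with the kernel computation already in Proposition \ref{thmm}, and then combine this with a classical surjectivity statement from the Galois theory of valuations.

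First, I would observe that Proposition \ref{thmm} has already identified the kernel of $\phi_{\pi}$ with the inertia group $T(\mathcal{O})$, and part (iii) of the same proposition tells us that $\mathbb{F}v \mid \mathbb{F}^{T(\mathcal{O})}v$ is purely inseparable. Since $T(\mathcal{O})$ is by construction a subgroup of $G=\mathrm{Aut}(\mathbb{F}\mid k)$, Lemma \ref{thm4} applies directly with $H:=T(\mathcal{O})$ and forces $T(\mathcal{O})=\{1\}$. Because $\ker\phi_{\pi}=T(\mathcal{O})$, this immediately proves the injectivity assertion of the theorem; the composed map $Z(\mathcal{O})\hookrightarrow\mathrm{Aut}(\mathbb{F}v\mid\mathbb{E}v_{\mathbb{E}})\hookrightarrow\mathrm{Aut}(\mathbb{F}v\mid kv)$ from Proposition \ref{thmm} then embeds $Z(\mathcal{O})$ into $\mathrm{Aut}(\mathbb{F}v\mid kv)$.

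For the sharper statement $Z(\mathcal{O})\simeq\mathrm{Aut}(\mathbb{F}v\mid\mathbb{E}v_{\mathbb{E}})$, surjectivity must still be addressed. Here I would appeal to the classical Galois theory of valuations: since $\mathbb{E}=\mathbb{F}^{G}$ is the fixed field of the finite group $G$, the extension $\mathbb{F}\mid\mathbb{E}$ is Galois with group $G$, and hence the residue extension $\mathbb{F}v\mid\mathbb{E}v_{\mathbb{E}}$ is normal. The standard theorem (see for instance Endler, Theorem 20.18, already cited in the proof of Proposition \ref{thmm}) asserts that the natural homomorphism
\begin{equation*}
Z(\mathcal{O})/T(\mathcal{O})\longrightarrow\mathrm{Aut}(\mathbb{F}v\mid\mathbb{E}v_{\mathbb{E}})
\end{equation*}
induced by $\phi_{\pi}$ is surjective. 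Since the quotient on the left collapses to $Z(\mathcal{O})$ itself, by the triviality of $T(\mathcal{O})$ obtained above, the isomorphism follows at once.

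In short, the theorem is now essentially a consolidation: Lemma \ref{thm4} kills the inertia, and the classical surjectivity onto the automorphism group of the residue extension provides the reverse inclusion. The genuine mathematical content sits in Lemma \ref{thm4}, whose proof (via the arithmetic-genus comparison on the $\mathcal{O}_{k}$-curve $\mathcal{C}_v$) is what removes the restriction to the good-reduction or discrete case. Once that lemma is in hand, the only delicate point here is to remember that $\phi_{\pi}$ lives naturally over $\mathbb{E}$ rather than over $k$, so that the classical surjectivity result is invoked for the Galois extension $\mathbb{F}\mid\mathbb{E}$ and not for the (generally non-Galois) extension $\mathbb{F}\mid k$; the formulation of Proposition \ref{thmm} has already set up the right target $\mathrm{Aut}(\mathbb{F}v\mid\mathbb{E}v_{\mathbb{E}})$ for this to go through cleanly.
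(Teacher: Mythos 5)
Your proposal is correct and follows essentially the same route as the paper: surjectivity of $\phi_{\pi}$ onto $\mathrm{Aut}(\F v{\mid}\E v_\E)$ comes from the classical decomposition theory of the Galois extension $\F{\mid}\E$ (the paper cites Endler, Theorem 19.6, rather than 20.18, for this), and injectivity follows by applying Lemma \ref{thm4} to the kernel $T(\mathcal{O})$, whose residue extension is purely inseparable by Proposition \ref{thmm}(iii). The only divergence is the precise reference for the surjectivity statement, which does not affect the argument.
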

\begin{proof} According to a theorem in \cite{endler} (Theorem 19.6), the homomorphism $$\phi_{\pi}: Z(\mathcal{O})\rightarrow \mathrm{Aut}(\F v{|}\E v)$$ is surjective. However, by Proposition \ref{thmm} iii., the extension $\F v{|}\F^{T(\mathcal{O})}v$ is purely inseparable where $T(\mathcal{O})$ is the kernel of $\phi_{\pi}.$ Using Lemma \ref{thm4}, we conclude that $T(\mathcal{O})$ is a trivial group.
\end{proof}

\

\begin{rem}
In the previous theorem, let us assume that the valuation $v$ is a good reduction. So, the valuation $v$ is the only prolongation of the valuation $v_\E$ on $\E.$ Hence, we have:
$$Z(\mathcal{O})=\mathrm{Aut}(\F{|}k).$$ 

By Theorem \ref{thmT}, we conclude that
$$\mathrm{Aut}(\F{|}k)\subseteq\mathrm{Aut}(\F v{|}k v)$$ via the homomorphism $\phi_\pi.$ Hence, Theorem \ref{thmT} is a generalisation of the Knaf's theorem in \cite{knaf} which generalizes Theorem \ref{thm21} of Deligne and Mumford in the case when the reduction is good. 
\end{rem}

\

Moreover, we can generalize Theorem \ref{thmT} as follows:

\

Consider the infnorm $w$ with respect to the set of valuations $V.$ We have $$\F w:=\mathcal{O}'_{v_\E}/\left( \mathcal{M}_{\mathcal{O}_{v_\E}}\cdot \mathcal{O}_v'\right) =\displaystyle{\prod_{\mathcal{O}\in \A}} \mathcal{O}/\mathcal{M}_\mathcal{O}\simeq(\mathbb{F}v)^s.$$ Since the ring $\mathcal{O}'_{v_\E}$ is invariant under the action of $G,$ there exists an homomorphism 
$$\phi: G\ni \sigma\mapsto \phi(\sigma) \in \mathrm{Aut}(\mathbb{F}w{\mid}\E v_\E)\subseteq\mathrm{Aut}(\mathbb{F}w{\mid}k v)$$ such that for any $\sigma\in G,$
$$\psi\circ\sigma=\phi(\sigma)\circ\psi $$ where $\psi$ is the canonical homomorphism 
$$\psi: \mathcal{O}'_{v_\E}\rightarrow \F w$$ induced by the place $\pi$ corresponding to the valuation $\mathcal{O}$ above. Note that $\psi$ does not depend on which place we consider. Indeed, $\mathcal{O}'_{v_\E}$ is invariant under $G$ and any prolongation of $\mathcal{O}_{v_\E}$ in $\F$ is given by $\sigma(\mathcal{O})$ for some $\sigma\in G.$ The kernel of the homomorphism $\phi$ is 
$$\mathrm{Ker}\phi=\left\lbrace \sigma\in G\ {|} \ \sigma(x)-x\in \bigcap_{\mathcal{O}\in\mathcal{A}}\ \mathcal{M}_\mathcal{O}, \ \text{for all}\ x\in\mathcal{O}'_{v_\E}\right\rbrace.$$

We have:
\begin{pro}\label{lpro}
Consider the normal subgroup 
$$N{:=}\displaystyle{\bigcap_{\mathcal{O}\in\mathcal{A}}} \ Z(\mathcal{O})$$
of $G.$ Then, the restriction of the homomorphism $\phi$ on $N$ is injective. In particular, if $G$ is abelian, for a given valuation ring $\mathcal{O}$ in $\mathcal{A},$ we have
$$Z(O)\subseteq\mathrm{Aut}(\F w{|}\E v_\E)$$ via the homomorphism $\phi.$

\end{pro}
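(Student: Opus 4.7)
The plan is to reduce the injectivity of $\phi|_N$ to the triviality result of Lemma \ref{thm4} by showing that $N\cap\mathrm{Ker}\,\phi$ lies inside the inertia group of a single place. Fix $\sigma\in N\cap\mathrm{Ker}\,\phi$ and an arbitrary $\mathcal{O}\in\A$. Since $\sigma\in N$ one has $\sigma\in Z(\mathcal{O})$, and the description of $\mathrm{Ker}\,\phi$ recalled just before the statement yields
$$\sigma(x)-x\in\bigcap_{\mathcal{O}'\in\A}\mathcal{M}_{\mathcal{O}'}\qquad\text{for every } x\in\mathcal{O}'_{v_\E}.$$
If this can be upgraded to the stronger assertion $\sigma(x)-x\in\mathcal{M}_\mathcal{O}$ for \emph{every} $x\in\mathcal{O}$, then Proposition \ref{thmm}(i) places $\sigma$ in $T(\mathcal{O})$; by Proposition \ref{thmm}(iii) the extension $\F v\,|\,\F^{T(\mathcal{O})}v$ is purely inseparable, and Lemma \ref{thm4} then forces $T(\mathcal{O})=\{1\}$, so $\sigma=1$.

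To perform the upgrade I would invoke weak approximation for the finite set $\A$ of pairwise incomparable prolongations of $v_\E$ to $\F$. Given $x\in\mathcal{O}$, this produces $x'\in\F$ with $v_\mathcal{O}(x'-x)>0$ and $v_{\mathcal{O}'}(x')\geq 0$ for every $\mathcal{O}'\in\A\setminus\{\mathcal{O}\}$; the first inequality together with $x\in\mathcal{O}$ also forces $x'\in\mathcal{O}$, so in fact $x'\in\bigcap_{\mathcal{O}'\in\A}\mathcal{O}'=\mathcal{O}'_{v_\E}$. Splitting
$$\sigma(x)-x\;=\;\sigma(x-x')\;+\;\bigl(\sigma(x')-x'\bigr)\;-\;(x-x'),$$
each of the three summands lies in $\mathcal{M}_\mathcal{O}$: the middle one by the kernel hypothesis applied to $x'\in\mathcal{O}'_{v_\E}$, and the outer two because $\sigma(\mathcal{M}_\mathcal{O})=\mathcal{M}_\mathcal{O}$ (which holds since $\sigma\in Z(\mathcal{O})$ is an automorphism preserving the local ring $\mathcal{O}$). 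This completes the promotion and therefore the proof of injectivity on $N$.

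The second assertion is a formal consequence of the first. If $G$ is abelian, the conjugation formula $Z(\sigma\mathcal{O})=\sigma Z(\mathcal{O})\sigma^{-1}$ from Section 3 collapses to $Z(\sigma\mathcal{O})=Z(\mathcal{O})$; since $G$ acts transitively on $\A$, this yields $Z(\mathcal{O}')=Z(\mathcal{O})$ for every $\mathcal{O}'\in\A$, hence $N=Z(\mathcal{O})$, and the embedding $Z(\mathcal{O})\hookrightarrow\Aut(\F w\,|\,\E v_\E)$ follows at once. The main obstacle in the argument is precisely the passage from a condition holding only on the small ring $\mathcal{O}'_{v_\E}$ to one holding on the larger valuation ring $\mathcal{O}$: elements of $\mathcal{O}$ need not be integral at the other $\mathcal{O}'\in\A$, and the weak approximation step is the one genuinely delicate ingredient that repairs this mismatch. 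Everything else is bookkeeping on top of the already-proved Lemma \ref{thm4}.
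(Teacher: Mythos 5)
Your proposal is correct, and it reaches the same intermediate target as the paper --- namely that any $\sigma$ in the kernel of $\phi|_N$ lies in $T(\mathcal{O})$ for a (hence every) $\mathcal{O}\in\A$, after which Proposition \ref{thmm}(iii) and Lemma \ref{thm4} (equivalently, Theorem \ref{thmT}) finish the job --- but by a genuinely different route. The paper gets there by a ramification-theoretic argument in the style of Endler: it introduces $h_\sigma(x\cdot\E^\times)=\sigma(x)/x$, shows that for $\sigma$ in the kernel the composite $\psi\circ h_\sigma$ factors through the quotient $\Delta/\Gamma$ of value groups, and then uses the hypothesis that $k$ is algebraically closed (so $\Delta=\Gamma$) to conclude $\sigma(x)/x-1\in\bigcap_{\mathcal{O}\in\A}\mathcal{M}_{\mathcal{O}}$ for all $x\in\F^\times$, whence $\sigma(x)-x\in\mathcal{M}_\mathcal{O}$ for $x\in\mathcal{O}$. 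Your approximation argument replaces all of this by a direct promotion from the holomorphy ring $\mathcal{O}'_{v_\E}$ to the valuation ring $\mathcal{O}$; it is more elementary and, notably, makes no use of the unramifiedness $\Delta=\Gamma$, so it would survive in settings where the value group does ramify (the paper's later remark explicitly ties its own proof to the triviality of $\Delta/\Gamma$). The one point where you should be more careful is the approximation step itself: the members of $\A$ are pairwise incomparable but need not be pairwise \emph{independent} when $v_\E$ has rank $>1$, so the naive strong approximation theorem does not apply verbatim; you need either Ribenboim's approximation theorem for incomparable valuations (whose compatibility conditions are satisfied in your instance, since $v_\mathcal{O}(x)\geq0$) or, more simply, the fact that $\mathcal{O}'_{v_\E}$ is the integral closure of $\mathcal{O}_{v_\E}$ in $\F$, is semilocal with localizations exactly the members of $\A$, so that writing $x=a/s$ with $a,s\in\mathcal{O}'_{v_\E}$ and $s\notin\mathcal{M}_\mathcal{O}$ and applying the Chinese Remainder Theorem produces your $x'$ (or even lets one compute $\sigma(x)-x=\bigl(s(\sigma(a)-a)-a(\sigma(s)-s)\bigr)/(s\sigma(s))\in\mathcal{M}_\mathcal{O}$ directly, using $v_\mathcal{O}\circ\sigma=v_\mathcal{O}$). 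Your treatment of the abelian case agrees with what the statement requires: conjugacy of the decomposition groups plus transitivity of $G$ on $\A$ gives $N=Z(\mathcal{O})$.
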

\begin{proof}
Let us denote by $T$ the kernel of the restriction of $\phi$ to the normal subgroup $N.$ 

For any $\sigma$ in $G,$ consider the following homomorphism which is defined by
\begin{align*}
h_\sigma \ : \ \F^{\times}/\E^{\times}&\rightarrow\F^\times\\
x\cdot\E^{\times}&\mapsto\frac{\sigma(x)}{x}.
\end{align*}
Denote respectively by $\Delta$ and $\Gamma$ the value group of $w$ and $v_\E.$ The mapping 
\begin{align*}
 \F&\rightarrow\Delta\\
x&\mapsto w(x)
\end{align*}
induces a sujective map $w^{\times}$ from $\F^{\times}/\E^{\times}$ to $\Delta/\Gamma.$ Note that for any $\sigma\in N$ and $x\in\F^\times$ we have
\begin{equation} \label{eqz}
h_\sigma(x\cdot\E^\times)\in U_{\mathcal{O}'_{v_\E}}.
\end{equation} 
Indeed, fix a valuation ring $\mathcal{O}$ in $\mathcal{A}.$ Denote by $v$ the corresponding valuation. For any $x\in\F^\times,$ we have 
$$v\left(\frac{\sigma(x)}{x}\right) =v\circ\sigma(x)-v(x).$$ But, since $\sigma$ is in $N,$ in particular, $\sigma$ belongs to $Z(\mathcal{O}).$ Hence, $v\circ\sigma=v.$ Thus, $v\left(\frac{\sigma(x)}{x}\right) \in U_{\mathcal{O}}.$ The valuation ring $\mathcal{O}$ being arbitrary, the statement \ref{eqz} holds. Furthermore, for any $\sigma\in T$ and $u\in U_{\mathcal{O}'_{v_\E}},$  we have:
$$ h_\sigma(u\cdot\E^\times)\in 1+\bigcap_{\mathcal{O}\in\mathcal{A}}\mathcal{M}_{\mathcal{O}}.$$ The set $U_{\mathcal{O}'_{v_\E}}$ denotes the group of all unit of the ring $\mathcal{O}'_{v_\E}.$ Since the vanishing set of the map $w^\times$ is the set
$$V_w=\left\lbrace u\cdot\E^\times \ {|} \ u\in U_{\mathcal{O}'_{v_\E}} \right\rbrace $$ and the kernel of the homomorphism $\psi\circ h_\sigma$ contains $V_w,$ we conclude, with the surjectivity of $w^\times,$ that for any $\sigma\in T,$ there exists a unique map $\overline{h}_\sigma\in \mathrm{Hom}(\Delta/\Gamma,\F w)$ such that 
$$\overline{h}_\sigma\circ w^\times=\psi\circ h_\sigma.$$ However, the base field $k$ is assumed to be algebraically closed. This implies that any prolongation of the valuation $v_\E$ on $\F$ is unramified. Hence, $\Delta=\Gamma.$ Let $\sigma\in T.$ Then, we have $\overline{h}_{\sigma}=\mathrm{Id}_{\Delta/\Gamma}.$ That is, for any $x\in\F^\times,$ we have 
$$\psi\left(\frac{\sigma(x)}{x} \right)=(\psi\circ h_{\sigma})(x\cdot\E^\times)=\mathrm{Id}_{\Delta/\Gamma}(w(x)+\Gamma)=1.$$
Therefore, for any $x\in\F^{\times},$ we conclude that
$$\frac{\sigma(x)}{x}-1\in \bigcap_{\mathcal{O}\in\mathcal{A}}\mathcal{M}_{\mathcal{O}}.$$
In particular, for all $x\in \mathcal{O}$ where $\mathcal{O}$ is any valuation ring in $\mathcal{A},$ we have
$$\sigma(x)-x\in \mathcal{M}_\mathcal{O}.$$ Thus, $\sigma\in T(\mathcal{O}),$ the kernel of the homomorphism $\phi_\pi$ defined above where $\pi$ is the place which corresponds to the valuation ring $\mathcal{O}.$ According to Theorem \ref{thmT}, $T(\mathcal{O})$ is the trivial group. Hence, $\sigma=\mathrm{Id}_{\F}.$ In fact, we have
$$T=\bigcap_{\mathcal{O}\in\mathcal{A}}T(\mathcal{O}).$$
\end{proof}

\

\begin{rem}
The proof of the previous proposition is, somehow, a generalisation of a theory developed in \cite{endler} to compute the ramification group $V(\mathcal{O})$ of a valuation ring $\mathcal{O}$ in $\mathcal{A}$ over $\E.$ Besides, recall that our proof of Theorem \ref{thmT} use the fact that the kernel $T(\mathcal{O})$ coincides with the ramification group $V(\mathcal{O})$ since $\Delta/\Gamma$ is trivial. Note that the group $V(\mathcal{O})$ is a $p$-subgroup of $G$ (see \cite{endler} Table p. 171).
\end{rem}

\end{document}